\newcommand{\R}{\mathbb{R}}
\newcommand{\C}{\mathbb{C}}
\newcommand{\N}{\mathbb{N}}
\newtheorem{defin}{Definition}[section]
\newtheorem{theorem}[defin]{Theorem}
\newtheorem{exa}[defin]{Example}
\newenvironment{example}{\begin{exa}\rm}{\end{exa}}
\newtheorem{lemma}[defin]{Lemma}
\newtheorem{corollary}[defin]{Corollary}
\newenvironment{proof}
{\noindent{\it Proof.}}{\hfill $\Box$\par\vspace{2.5mm}}
\newtheorem{que}{Question}
\newtheorem{pro}{Problem}
\numberwithin{equation}{section}
\title{\bf\Large Representation of finite order solutions to linear differential equations with exponential sum coefficients}
\author{Xing-Yu Li, Jun Wang and Zhi-Tao Wen\footnote{Wen is the corresponding author}}
\date{}
\begin{document}

\maketitle
\begin{abstract}
We show a necessary and sufficient condition on the existence of finite order entire solutions of linear differential equations
    $$
    f^{(n)}+a_{n-1}f^{(n-1)}+\cdots+a_1f'+a_0f=0,\eqno(+)
    $$
where $a_i$ are exponential sums for $i=0,\ldots,n-1$
with all positive (or all negative) rational frequencies and constant coefficients. Moreover, under the condition that there exists a finite order solution of (+) with exponential sum coefficients having rational frequencies and constant coefficients, we give the precise form of all finite order solutions, which are exponential sums. It is a partial answer to
Gol'dberg-Ostrovski\v{i} Problem and Problem 5 in \cite{HITW2022} since exponential sums are of completely regular growth.

\bigskip

\noindent
\textbf{Keywords:} Exponential sum, completely regular growth,
Gol'dberg-Ostrovski\v{i}'s problem, finite order solution

\medskip
\noindent
\textbf{2020 MSC:} Primary 34M05, Secondary 30D35
\end{abstract}

\section{Introduction}

The solutions of the linear differential equation
	\begin{equation}\label{lde}
      f^{(n)}+a_{n-1}(z)f^{(n-1)}+\cdots +a_1(z)f'+a_0(z)f=0
      \end{equation}
with entire coefficients $a_0(z),\ldots,a_{n-1}(z)$ are entire. To avoid ambiguity, we assume
that $a_0(z)\not\equiv 0$.
The following two classical theorems give general information concerning finite order solutions of \eqref{lde}.

\bigskip\noindent
\textbf{Wittich's theorem.} \cite[Theorem~4.1]{Laine} or \cite[p.~6]{Wittich}
\emph{The coefficients $a_0(z)$, $\ldots$, $a_{n-1}(z)$ of \eqref{lde} are polynomials if and only
if all solutions of \eqref{lde} are of finite order.}

\bigskip\noindent
\textbf{Frei's theorem.} \cite[Theorem~4.2]{Laine} or  \cite[p.~207]{Frei0}
\emph{Suppose that at least one coefficient in \eqref{lde} is transcendental, and that
$a_j(z)$ is the last transcendental coefficient, that is, the coefficients
$a_{j+1}(z),\ldots,a_{n-1}(z)$, if applicable, are polynomials. Then \eqref{lde} possesses at
most $j$ linearly independent solutions of finite order.}

\bigskip
Frei \cite{Frei} proved that the equation
	\begin{equation*}\label{Frei-eqn}
	f''+e^{-z}f'+\alpha f=0,\quad \alpha\in\C\setminus\{0\},
	\end{equation*}
has a subnormal solution, which is defined as
    $$
    \limsup_{r\to\infty}\frac{\log T(r,f)}{r}=0,
    $$
if and only if $\alpha=-m^2$ for a positive
integer $m$. The subnormal solution $f$ is a polynomial in $e^z$ of degree $m$, that is, an exponential sum of the form
	\begin{equation*}\label{Frei-f}
	f(z)=1+C_1e^z+\cdots+C_me^{mz}, \quad C_j\in \mathbb{C}\setminus\{0\}.
	\end{equation*}
The result above has been generalized by Ozawa \cite{Ozawa}, to the differential equation
    \begin{equation}\label{Ozawa-eqn}
    f''+e^{-z}f'+(az+b)f=0,\quad a,b\in\C.
    \end{equation}
He showed that there is no entire solution of finite order satisfying the differential
equation \eqref{Ozawa-eqn}, if $a\neq 0$.

Ozawa posed a question whether there exists a finite order solution of the equation
    \begin{equation}\label{Ozawa}
    f''+A(z)f'+B(z)f=0,
    \end{equation}
where $A=e^{-z}$ and $B$ is a non-constant polynomial. This question is
known as Ozawa's problem. This problem has been
answered partly by Amemiya-Ozawa \cite{AO} and
by Gundersen \cite{Gary}, while the complete solution is given by Langley \cite{Langley}. It was shown in \cite{Langley} that if $A=ce^{-z}$, where $c\in\C\setminus\{0\}$, and $B(z)$ is a non-constant polynomial, then all non-trivial solutions of \eqref{Ozawa} are infinite order.

The general problem of Ozawa has been proposed by
 Heittokangas, and given by Gundersen, see \cite[Section~5]{Garyproblem}.
More and more papers focus on the existence of finite order
solutions of second linear differential equations \eqref{Ozawa} with entire coefficients, even precise form of finite order solutions, see \cite{HILT1,HILT2,ProblemOzawa,Long,Wang-Laine,Wittich3}.

The examples above inspire us to focus on the finite order solutions of linear differential equations \eqref{lde} with exponential sum coefficients. An exponential sum is an entire
function of the form
    \begin{equation}\label{expsum.eq}
    f = F_1e^{\lambda_1z} + \cdots + F_me^{\lambda_mz},
    \end{equation}
where $\lambda_1,\ldots,\lambda_m\in\C$ are distinct constants called frequencies or leading coefficients of $f$,
and the coefficients $F_1,\ldots,F_m$ are polynomials. Observe that a polynomial is a special
case of an exponential sum. We also note that any exponential sum is a solution of an
equation of the form \eqref{lde} with polynomial coefficients, see \cite{VPT}.

The literature contains numerous examples of finite order solutions of linear differential equations of the
form \eqref{expsum.eq} with exponential sum coefficients, which is related to the
open problem 5 in \cite{HITW2022}, which asks whether finite order
transcendental solutions of \eqref{lde} are always of completely regular growth whenever the
coefficients of \eqref{lde} are exponential polynomials.

In \cite{WGH} the authors, including one of the authors of this paper, study linear differential equations with exponential polynomial coefficients, where exactly one coefficient
is of order greater than all the others. The main result shows that a nontrivial exponential polynomial
solution of such an equation has a certain dual relationship with the maximum order coefficient.

From these considerations, it is interesting for us to consider the finite order solutions
of differential equations
    \begin{equation}\label{expdiff.eq}
    f^{(n)}+A_{n-1}(z)f^{(n-1)}+\cdots+A_1(z)f'+A_0(z)f=0
    \end{equation}
with exponential sum coefficients $A_i(z)$ for $i=0,1,\ldots,n-1$ of the form \eqref{expsum.eq}, where the frequencies
$\lambda_1,\ldots,\lambda_m$ are rational and the coefficients $F_1,\ldots,F_m$ are constants.
 The problems from Ozawa and \cite[Problem 5]{HITW2022} inspire us to consider the following questions:
 \begin{itemize}
 \item[(1)]
 Can one find a condition which guarantees the existence of a non-trivial finite order solution of \eqref{expdiff.eq}?
 \item[(2)]
 Suppose that there exists a finite order solution $f$ of \eqref{expdiff.eq}, is it true that $f$ is of completely regular growth?
 \end{itemize}
 In this paper, we will give a necessary and sufficient condition on the existence of finite order entire solutions of linear differential equations \eqref{expdiff.eq} when the frequencies of exponential sum coefficients are either all positive or all negative. Moreover, we discuss the representation of finite order solutions of differential equations \eqref{expdiff.eq}, which are of completely regular growth. Several examples
are given to illustrate the results.

This paper is organized as follows. In Section \ref{complete}, we discuss the problem on completely regular growth. The theorems and examples are given in Section \ref{main.sec}. We recall the fundamental results on formal solutions of linear differential equations in Section \ref{power.sec}.
The proofs of the main results are given in Section \ref{proof.se}.

\section{Completely regular growth}\label{complete}

The Phragm\'en-Lindel\"of indicator function of an entire function $f(z)$ of finite order $\rho(f)>0$ is given by
    $$
    h_f(\theta)=\limsup_{r\to\infty}r^{-\rho(f)}\log |f(re^{i\theta})|
    $$
for $\theta\in [-\pi,\pi)$. We say that $f$ is of completely regular growth (see \cite[pp.~139-140]{Levin1} or \cite[pp.~6-8]{Ponkin}) if there exists a sequence of Euclidean discs $D(z_k,r_k)$ satisfying
	\begin{equation}\label{r.eq}
	\sum_{|z_k|\leq r}r_k=o(r)
	\end{equation}
such that
	\begin{equation}\label{crg}
	\log |f(re^{i\theta})|= (h_f(\theta)+o(1))r^{\rho(f)}, \quad re^{i\theta}\not\in\bigcup_k D(z_k,r_k),
	\end{equation}
as $r\to\infty$ uniformly in $\theta$. For example, a transcendental exponential polynomial function is of completely regular growth, see \cite[Lemma 1.3]{GOP}.

A set $E\subset\C$ which can be covered by a sequence of discs $D(z_k,r_k)$ satisfying \eqref{r.eq} is known as a $C_0$-set. The projection of a $C_0$-set on the positive real axis has zero upper linear density.

Petrenko \cite[pp.~104--112]{Petru} has shown that transcendental
solutions to linear differential equations \eqref{lde} with polynomial
coefficients are of completely regular growth. See also \cite{Stein}
for parallel discussions. Based on Petrenko's result,
Gol'dberg and Ostrovski\v{i} stated the following problem, see \cite[p.~300]{problembook}.

\bigskip
\noindent
\textbf{Gol'dberg-Ostrovski\v{i}'s Problem}.~\emph{Suppose that $f$ is a finite order transcendental solution of \eqref{lde}
whose coefficients are entire functions of completely regular growth. Is it true that $f$ is of completely regular growth?}

\bigskip

Without the condition of completely regular growth for the coefficients, the answer to this problem is negative, see \cite[p.~300]{problembook}.
In a recent paper, Bergweiler \cite{Bergweiler} has answered this question in the negative.
It is natural to consider linear differential equations with transcendental exponential
sum coefficients as an important special case of this problem, see \cite[Problem 5]{HITW2022}.
In Sections \ref{main.sec}, we discuss that the representation of
finite order solutions
of linear differential equations \eqref{lde} with exponential sum coefficients of the form \eqref{expsum.eq}, whose frequencies
$\lambda_1,\ldots,\lambda_m$ are rational and the coefficients $F_1,\ldots,F_m$ are constants, are exponential sums. It is still open whether all finite order solutions of \eqref{lde} with exponential polynomial coefficients are of completely regular growth.

\section{Results and examples}\label{main.sec}

In this paper, we focus on solutions of linear differential equations with exponential sum coefficients of the form \eqref{expsum.eq}. In particular, all frequencies $\lambda_1,\ldots,\lambda_m$ are rational and the coefficients $F_1,\ldots,F_m$ are constants. Then, in this case there exists a rational number $\lambda'$ such that $\lambda_j=t_j\lambda'$, where $t_j$ are integers for $i=1,\ldots,m$.
The discussion above shows us the linear differential equations
we considered about are of the form
	\begin{equation}\label{NPDE}
    e^{\gamma\lambda' z}f^{(n)}+P_{n-1}(e^{\lambda' z})f^{(n-1)}+\cdots +P_1(e^{\lambda' z})f'+P_0(e^{\lambda' z})f=0,
    \end{equation}
where $P_0,\ldots,P_{n-1}$ are polynomials and $\gamma\in\N_+\cup\{0\}$. We note here that the finite order solutions of second order differential equations \eqref{NPDE} are related to some special functions when $\gamma=0$, see \cite{Olver}.

For simplicity, we assume $\lambda'=1$. Hence, let us consider finite order solutions of linear differential equations
	\begin{equation}\label{NPDE1}
    e^{\gamma z}f^{(n)}+P_{n-1}(e^{z})f^{(n-1)}+\cdots +P_1(e^{z})f'+P_0(e^{z})f=0,
    \end{equation}
where $P_0,\ldots,P_{n-1}$ are polynomials and $\gamma\in\N_+\cup\{0\}$. Obviously, every solution of \eqref{NPDE1} is entire. Let us discuss finite order solutions of \eqref{NPDE1}. Now we state one of our main results as follows.

\begin{theorem}\label{gen.theorem}
Suppose that $\gamma=0$.
The linear differential equation \eqref{NPDE1} has a finite order entire solution $f$ of the form
    $$
    f(z)=e^{\lambda z}u(e^z),
    $$
where $\lambda\in\C$ and $u$ is a polynomial, if and only if $u$ is a polynomial solution of linear differential equations
    \begin{equation}\label{ration2.eq}
     \left(\begin{matrix}
    u, &tu', &  &\cdots, t^nu^{(n)}
    \end{matrix}\right)
\left(\begin{matrix}
	q_{0,0} & q_{0,1} \cdots & q_{0,n}\\
	\vdots &\vdots &\vdots \\
	q_{n,0} & q_{n,1} \cdots & q_{n,n}
\end{matrix}\right)
        \left(\begin{matrix}
    m_{0,0} & m_{0,1} \cdots & m_{0,n}\\
    \vdots &\vdots &\vdots \\
    m_{n,0} & m_{n,1} \cdots & m_{n,n}
    \end{matrix}\right)
    \left(\begin{matrix}
    P_0\\
    \vdots\\
    P_n
    \end{matrix}\right)=0,
    \end{equation}
where $P_n=1$ and $(m_{i,j})$ is a $(n+1)\times(n+1)$ matrix such that $m_{i,j}$ are the Stirling numbers of the second kind satisfying
    $$
    m_{i,j}=\frac{1}{i!}\sum_{k=0}^i(-1)^k\binom{i}{k}(i-k)^j,
    $$
and $(q_{i,j})$ is a $(n+1)\times(n+1)$ matrix such that
    $$
    q_{i,j}=\binom{j}{i} \lambda^{\underline{j-i}}\quad\text{for}\quad 0\leq i\leq j\leq n\quad \text{and}\quad q_{i,j}=0\quad\text{for} \quad 0\leq j<i\leq n
    $$
by denoting $\lambda^{\underline{n}}=\lambda(\lambda-1)\cdots(\lambda-n+1)$, and $\lambda$ is a root of the equation
 \begin{equation}\label{s.eq}
	\lambda^{n}+P_{n-1}(0)\lambda^{n-1}+
\cdots+P_1(0)\lambda+P_0(0)=0.
\end{equation}
\end{theorem}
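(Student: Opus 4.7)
The plan is to carry out the substitution $f(z) = e^{\lambda z} u(e^z)$ directly and then recast the resulting equation in the matrix form stated. Write $t = e^z$ and let $\theta := t\,d/dt$ be the Euler operator, so that $d/dz = \theta$ when acting on functions of $t$. The key algebraic identity is
\begin{equation*}
\theta\bigl(t^\lambda g(t)\bigr) = t^\lambda(\theta + \lambda)g(t),
\end{equation*}
which iterates to $f^{(k)}(z) = t^\lambda(\theta+\lambda)^k u(t)$. Plugging into \eqref{NPDE1} with $\gamma = 0$ and $P_n := 1$, then cancelling the nonvanishing factor $t^\lambda$, reduces the ODE to the equivalent operator equation $\sum_{k=0}^n P_k(t)(\theta+\lambda)^k u(t) = 0$.

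The second step is a combinatorial bookkeeping task translating this into the triple matrix product of \eqref{ration2.eq}. Two classical identities do the work. The Stirling expansion $x^k = \sum_j S(k,j)\,x^{\underline{j}}$ applied with $x = \theta + \lambda$ produces the Stirling matrix $M$, since the formula given for $m_{i,j}$ is exactly $S(j,i)$. The Vandermonde identity for falling factorials $(x+y)^{\underline{j}} = \sum_i \binom{j}{i}\,x^{\underline{i}}\,y^{\underline{j-i}}$, applied with $x = \theta$ and $y = \lambda$, together with the well-known $\theta^{\underline{i}} u = t^i u^{(i)}$, yields
\begin{equation*}
(\theta+\lambda)^{\underline{j}} u \;=\; \sum_{i=0}^{j}\binom{j}{i}\lambda^{\underline{j-i}}\,t^i u^{(i)} \;=\; \sum_i q_{i,j}\bigl(t^i u^{(i)}\bigr),
\end{equation*}
which is precisely the action of $Q$. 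Composing the two transitions shows that $\sum_k P_k(\theta+\lambda)^k u$ equals the triple product $(u,\,tu',\dots,t^n u^{(n)})\,Q\,M\,(P_0,\dots,P_n)^T$, so the ODE is equivalent to \eqref{ration2.eq}.

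The indicial equation \eqref{s.eq} is extracted by evaluating the operator equation at $t = 0$. Since $\theta^{\underline{i}} u|_{t=0} = 0$ for $i \geq 1$, an easy induction gives $(\theta+\lambda)^k u|_{t=0} = \lambda^k u(0)$, so the constant term of the operator equation reads $\bigl(\sum_{k=0}^{n} P_k(0)\lambda^k\bigr) u(0) = 0$. After normalising the representation so that $u(0) \neq 0$ (any factor $t^r$ of $u$ can be absorbed into $e^{\lambda z}$, replacing $\lambda$ by $\lambda + r$), this is exactly \eqref{s.eq}. Conversely, any polynomial $u$ solving \eqref{ration2.eq} produces $f(z) = e^{\lambda z} u(e^z)$ satisfying \eqref{NPDE1} by reversing the substitution, and $f$ is then automatically an exponential sum, hence entire of finite order.

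The main obstacle, and essentially the only non-routine point, is the operator-algebra accounting in the second step: one must verify that the Vandermonde identity for falling factorials is valid in the operator setting (which it is because $\theta$ commutes with the scalar $\lambda$) and pin down that the two transition matrices compose in the order $QM$ with the index conventions $m_{i,j} = S(j,i)$ and $q_{i,j} = \binom{j}{i}\lambda^{\underline{j-i}}$ as stated. Once these indexings are aligned, every remaining computation is direct verification.
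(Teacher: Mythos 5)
Your proposal is correct, and it reaches the conclusion by a genuinely different route from the paper. You conjugate the Euler operator directly: from $f^{(k)}(z)=t^{\lambda}(\theta+\lambda)^{k}u(t)$ with $\theta=t\,d/dt$, you reduce \eqref{NPDE1} to $\sum_{k}P_{k}(t)(\theta+\lambda)^{k}u=0$ and then unpack this into the triple matrix product of \eqref{ration2.eq} via the Stirling expansion $x^{k}=\sum_{i}m_{i,k}x^{\underline{i}}$ and the Vandermonde identity for falling factorials, with \eqref{s.eq} obtained by evaluating the constant term at $t=0$ after normalising $u(0)\neq 0$; your index bookkeeping ($m_{i,j}=S(j,i)$, $q_{i,j}=\binom{j}{i}\lambda^{\underline{j-i}}$, product order $QM$) checks out against the stated matrices. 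The paper instead passes to $v(t)=f(z(t))$, observes that $t=0$ is a regular singular point of the transformed equation \eqref{alpha.eq} because $\alpha_{n}\equiv 1$, computes the same indicial equation through the identity \eqref{m.eq}, invokes Fuchs--Frobenius theory to produce formal solutions $v=t^{\lambda}u$ with $u$ merely entire, and then needs a growth argument --- the Helmrath--Nikolaus result that transcendental entire solutions of \eqref{u.eq} have positive rational order, combined with P\'olya's theorem that $u(e^{z})$ then has infinite order --- to force $u$ to be a polynomial when $f$ has finite order. For the theorem as literally stated (where $f$ is assumed to have the form $e^{\lambda z}u(e^{z})$ with $u$ polynomial), your direct verification is cleaner and avoids both Frobenius theory and the growth estimates; what the paper's heavier machinery buys is the infrastructure reused in Theorem \ref{gen1.theorem} and Corollary \ref{two.coro}, where one must show that an \emph{arbitrary} finite order solution decomposes into such pieces, which your argument alone would not deliver.
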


\begin{theorem}\label{gen1.theorem}
Suppose that $\gamma=0$. If $f$ is a finite order solution of \eqref{NPDE1}, then $f$ is an exponential sum of the form
    \begin{equation}\label{arb.sol}
f(z) = C_1e^{\lambda_1 z}\sum_{p=0}^{K_1}F_{1,p}(z)u_{1,p}(e^z) + \cdots +C_me^{\lambda_m z}\sum_{p=0}^{K_m}F_{m,p}(z)u_{m,p}(e^z),
    \end{equation}
where $F_{j,p}(z)$ and $u_{j,p}(z)$ are polynomials such that $\deg{F_{j,p}(z)}=p$ for $p=0,\ldots,K_j$, and $\lambda_j$ are the roots of \eqref{s.eq} and $K_j\in\mathbb{N^+}\cup\{0\}$ such that $K_1+\cdots+K_m\leq n$, and $C_j\in\C$ for any given $j=1,\ldots,m$.
\end{theorem}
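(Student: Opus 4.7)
The plan is to combine Fuchsian (Frobenius) formal solution theory at the regular singular point $w=0$ of the associated ODE in $w=e^z$ with the finite order hypothesis to force the resulting power series coefficients to terminate into polynomials. First, I would make the change of variables $w=e^z$ and set $\theta=w\partial_w=\partial_z$, so that \eqref{NPDE1} with $\gamma=0$ transforms into
$$\theta^n g+P_{n-1}(w)\theta^{n-1}g+\cdots+P_0(w)g=0,\qquad g(w)=f(z).$$
This is a linear ODE in $w$ with polynomial coefficients, whose only finite singular point is the regular singular point $w=0$; its indicial polynomial at $w=0$ is precisely \eqref{s.eq}, with distinct roots $\lambda_1,\dots,\lambda_m$.

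Invoking the Fuchsian theory recalled in Section~\ref{power.sec}, I would next pick a fundamental system of $n$ formal solutions organised by the roots $\lambda_j$ in the standard shape
$$g_{j,l}(w)=w^{\lambda_j}\sum_{k=0}^{l}(\log w)^k\,\phi_{j,l,k}(w),\qquad l=0,\dots,\mu_j-1,$$
with $\phi_{j,l,k}$ formal power series and $\sum_j\mu_j=n$. Since the ODE has no other finite singularity, each $\phi_{j,l,k}$ has infinite radius of convergence and so extends to an entire function of $w$. Pulling back via $w=e^z$, the corresponding solutions of \eqref{NPDE1} become $h_{j,l}(z)=e^{\lambda_j z}\sum_{k=0}^{l}z^k\phi_{j,l,k}(e^z)$, and any entire solution of \eqref{NPDE1} is a $\mathbb{C}$-linear combination of these. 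After grouping by $\lambda_j$ this gives
$$f(z)=\sum_{j=1}^{m}e^{\lambda_j z}\sum_{p=0}^{K_j}z^p\,\psi_{j,p}(e^z)$$
for some entire functions $\psi_{j,p}$ and nonnegative integers $K_j$ with $\sum_j(K_j+1)\le n$, whence $\sum_j K_j\le n$.

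The decisive third step is to use the finite order of $f$ to show that every $\psi_{j,p}$ is actually a polynomial. The key elementary fact is that if $\psi$ is a transcendental entire function then $\psi(e^z)$ is of infinite order: $\log M(r,\psi)/\log r\to\infty$ forces $\log M(R,\psi\circ e^{\,\cdot})$ to grow at least like $e^{cR}$ for some $c>0$. To extract each $\psi_{j,p}$ termwise from the sum, I would choose representatives of the $\lambda_j$ modulo $\mathbb{Z}$ (say with $\mathrm{Re}\,\lambda_j\in[0,1)$) and exploit the $\mathbb{C}$-linear independence of the family $\{z^p e^{kz}:p\in\mathbb{N},k\in\mathbb{Z}\}$ against each fixed $e^{\lambda_j z}$ to uniquely recover the coefficients $\psi_{j,p}$ from $f$. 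This forces each $\psi_{j,p}$ to be of finite order as a function of $z$, hence to be a polynomial $u_{j,p}$ in $e^z$; taking $F_{j,p}(z)=z^p$ (or, after a change of basis inside the $(K_j+1)$-dimensional log-level, any polynomial of degree $p$) and absorbing scalars into $C_j$ yields exactly the representation \eqref{arb.sol}.

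The main obstacle lies precisely in the termwise separation just sketched: when $\lambda_i-\lambda_j\in\mathbb{Z}$ the products $e^{\lambda_i z}\cdot e^{kz}$ and $e^{\lambda_j z}\cdot e^{k'z}$ coincide, so the extraction of each $\psi_{j,p}$ can only be carried out after one collects contributions by the residue class of $\lambda_j\bmod\mathbb{Z}$ and argues within each such class. Once this grouping is done cleanly, the growth estimate above immediately rules out transcendental $\psi_{j,p}$, and the remaining work is the bookkeeping of polynomial degrees needed to match the precise form of \eqref{arb.sol}.
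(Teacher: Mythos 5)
Your first two steps (the substitution $w=e^z$, the Fuchsian/Frobenius structure of the formal solutions at the regular singular point $w=0$, the entirety of the power-series factors, and the resulting shape $f(z)=\sum_j e^{\lambda_j z}\sum_p z^p\psi_{j,p}(e^z)$) coincide with the paper's setup. The difficulty is entirely in your third step, and there are two genuine gaps there. First, your ``key elementary fact'' is false as stated: a transcendental entire $\psi$ need not make $\psi(e^z)$ of infinite order. If $\psi$ has order zero with, say, $\log M(s,\psi)\sim(\log s)^2$ (e.g.\ Taylor coefficients $a_n=e^{-n^2}$), then $\log M(R,\psi(e^z))\le\log M(e^R,\psi)\sim R^2$, so $\psi(e^z)$ has order $2<\infty$. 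The hypothesis $\log M(r,\psi)/\log r\to\infty$ only yields $\log M(R,\psi\circ\exp)/R\to\infty$, i.e.\ it rules out order $1$ finite type, not finite order. This is precisely why the paper does not work with ``finite order'' but first proves (Lemma~\ref{order.lemma}, via Wiman--Valiron) that a finite order solution necessarily has order exactly $1$ and \emph{mean type}; only the type bound $\log M(R,\psi(e^z))=O(R)$ forces $\log M(s,\psi)=O(\log s)$ and hence $\psi$ polynomial. (An alternative repair, which the paper uses in the proof of Theorem~\ref{gen.theorem}, is the Helmrath--Nikolaus result that transcendental entire solutions of the transformed equation have \emph{positive} rational order, after which P\'olya's theorem does give infinite order; but you invoke neither.)

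Second, the extraction of the individual $\psi_{j,p}$ is not achieved by linear independence. Uniqueness of the decomposition is an algebraic statement and does not transfer the growth bound from $f$ to each piece: a priori the terms $e^{\lambda_j z}z^p\psi_{j,p}(e^z)$ could individually grow fast and cancel. You need an analytic device that recovers each $\psi_{j,p}$ with controlled growth. The paper's device (Lemma~\ref{finite.lemma}) is the difference operator $\Delta g(z)=g(z+2\pi i)-g(z)$: since $e^z$ has period $2\pi i$, $\Delta$ leaves the factors $\mu(e^z)$ untouched while lowering the degree of the polynomial factors in $z$, so $\Delta^{\tau}\bigl(e^{-\lambda_j z}f_j\bigr)=c\,\mu_{j,\nu_j}(e^z)$ after $\tau$ steps; the Chiang--Feng theorem on $T(r,g(z+\eta))$ guarantees that order and type are preserved under these shifts, which is exactly the quantitative recovery your sketch is missing. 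Without some such mechanism (and without the type bound from the first point), the step ``this forces each $\psi_{j,p}$ to be of finite order'' does not follow.
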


We note here that $u_{j,p}$ in \eqref{arb.sol} is a polynomial solution of linear differential equations whose coefficients are related to $\lambda_j$. Now, let us state some corollaries by using Theorems \ref{gen.theorem} and \ref{gen1.theorem} together as follows.

\begin{corollary}\label{two.coro}
Suppose that $\gamma=0$. The linear differential equation \eqref{NPDE1} has a finite order entire solution $f$ if and only if $u$ is a polynomial solution of linear differential equations \eqref{ration2.eq}.
\end{corollary}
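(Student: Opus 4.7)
The plan is to obtain the corollary directly from Theorems \ref{gen.theorem} and \ref{gen1.theorem}, treating it essentially as a packaging of the two results into a single existence statement.

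For the sufficiency direction ($\Leftarrow$) there is almost nothing to do: if $u$ is a polynomial solution of \eqref{ration2.eq} with the matrix $(q_{i,j})$ formed using some root $\lambda$ of the indicial equation \eqref{s.eq}, then Theorem \ref{gen.theorem} directly supplies the finite order entire solution $f(z)=e^{\lambda z}u(e^z)$ of \eqref{NPDE1}.

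For the necessity direction ($\Rightarrow$), let $f\not\equiv 0$ be a finite order entire solution of \eqref{NPDE1}. Theorem \ref{gen1.theorem} furnishes the explicit representation \eqref{arb.sol}. The idea is to extract from this representation a single summand of the pure form $e^{\lambda z}u(e^z)$ that is itself a solution of \eqref{NPDE1}, and then invoke Theorem \ref{gen.theorem}. Concretely, for at least one index $j$ the $p=0$ summand $C_jF_{j,0}\,e^{\lambda_j z}u_{j,0}(e^z)$ must be nontrivial; since $F_{j,0}$ has degree $0$ it reduces to a constant, so after absorbing $C_jF_{j,0}$ into $u_{j,0}$ one obtains a candidate $e^{\lambda_j z}\tilde u(e^z)$ of the required pure shape. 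Once this candidate is recognized as a solution of \eqref{NPDE1}, Theorem \ref{gen.theorem} immediately implies that $\tilde u$ solves \eqref{ration2.eq}.

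The main obstacle is precisely the extraction step: verifying that an individual pure summand inside \eqref{arb.sol} really is a solution of \eqref{NPDE1}, rather than a piece that only cancels in combination with others. I would handle this by first partitioning the roots $\lambda_1,\ldots,\lambda_m$ into their cosets modulo $\Z$, and then using the linear independence over $\C[z,e^z]$ of exponentials $e^{\lambda z}$ coming from distinct cosets. Substituting \eqref{arb.sol} into \eqref{NPDE1} and grouping, the contribution associated with each coset must vanish separately. Within a single coset, one then orders the summands by the degree of the accompanying polynomial in $z$; the minimal-degree contribution carries no $z$-polynomial factor, takes the pure form $e^{\lambda z}\tilde u(e^z)$, and must itself be a solution of \eqref{NPDE1}. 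This reduces the statement exactly to the hypothesis of Theorem \ref{gen.theorem} and closes the argument.
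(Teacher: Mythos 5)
Your sufficiency direction is fine, and your overall strategy for necessity (reduce to a pure summand $e^{\lambda z}u(e^z)$ that solves \eqref{NPDE1} and then quote Theorem \ref{gen.theorem}) is the right one; the separation into cosets modulo $\Z$ is also legitimate. The gap is in the extraction step: you isolate the \emph{minimal}-degree ($p=0$) piece and claim it must itself be a solution, but it is the \emph{maximal}-degree coefficient that is forced to be one. Write the contribution of one coset as $e^{\lambda z}\sum_{p=0}^{K}\Phi_p(e^z)z^p$ and substitute into the operator $L$ defining \eqref{NPDE1}. Since $L(z^p h)=z^pL(h)+(\text{terms of lower degree in }z)$, collecting powers of $z$ gives $L\bigl(e^{\lambda z}\Phi_K(e^z)\bigr)=0$ from the coefficient of $z^{K}$, whereas the coefficient of $z^{0}$ only yields $L\bigl(e^{\lambda z}\Phi_0(e^z)\bigr)=-(\text{contributions from }\Phi_1,\ldots,\Phi_K)$, an inhomogeneous equation. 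This is exactly the Frobenius picture at a regular singular point with indicial roots differing by integers: in a logarithmic solution $t^{\lambda}(\varphi_0(t)+\varphi_1(t)\log t)$ the coefficient $\varphi_1$ of the logarithm solves the equation, but the non-logarithmic part $\varphi_0$ in general does not. For the same reason, your claim that the $p=0$ summand ``must be nontrivial'' can also fail: nothing a priori excludes a finite order solution whose $p=0$ level vanishes identically.

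The paper avoids this by extracting from the top down with a difference operator. Since the coefficients of \eqref{NPDE1} are $2\pi i$-periodic when $\gamma=0$, $f(z+2\pi i)$ is again a finite order solution, hence so is every iterate of $f(z)\mapsto f(z+2\pi i)-e^{2\pi i\lambda_j}f(z)$; equivalently, the paper applies $\Delta^{\nu_j}$ to $g=f_je^{-\lambda_j z}$ as in \eqref{doit.eq} and obtains $c\,\mu_{j,\nu_j}(e^z)$. This isolates precisely the top coefficient, so $e^{\lambda_j z}\mu_{j,\nu_j}(e^z)$ is a genuine pure solution of \eqref{NPDE1}, and Theorem \ref{gen.theorem} then shows that the corresponding $u_{j,\nu_j}$ is a polynomial solution of \eqref{ration2.eq}. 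If you replace your minimal-degree extraction by this top-degree extraction (either via the shift operator or by running your own substitute-and-collect argument on the coefficient of the highest power of $z$), your proof closes and becomes essentially the paper's.
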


\begin{corollary}\label{one.coro}
Suppose that $\gamma=0$. Then
the differential equation \eqref{NPDE1} is solved by a finite order entire solution $f$ provided
there exists $\lambda\in\C\setminus\{0\}$ such that
    $$
    \lambda^n+P_{n-1}(z)\lambda^{n-1}+\cdots+\lambda P_1(z)+P_0(z)=0
    $$
holds. Moreover, the differential equation \eqref{NPDE1} has a finite order solution of the form $f(z)=Ce^{\lambda z}$, where $C\in\C\setminus\{0\}$.
\end{corollary}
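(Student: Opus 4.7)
The plan is to verify directly that $f(z) = Ce^{\lambda z}$ satisfies \eqref{NPDE1} when the hypothesis holds, rather than pass through the full machinery of Theorem \ref{gen.theorem}; indeed $u\equiv 1$ is the simplest case to which that theorem applies, and bypassing it will make the argument essentially a one-line substitution.

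First, I would take $f(z) = Ce^{\lambda z}$ with $C \in \C \setminus \{0\}$ and compute $f^{(k)}(z) = C\lambda^k e^{\lambda z}$ for $k = 0, 1, \ldots, n$. Substituting into the left-hand side of \eqref{NPDE1} with $\gamma = 0$ and factoring out $Ce^{\lambda z}$ yields
$$Ce^{\lambda z}\bigl[\lambda^n + P_{n-1}(e^z)\lambda^{n-1} + \cdots + P_1(e^z)\lambda + P_0(e^z)\bigr].$$

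Second, I would interpret the hypothesis as the statement that the polynomial $Q(w) := \lambda^n + P_{n-1}(w)\lambda^{n-1} + \cdots + P_1(w)\lambda + P_0(w)$ in the variable $w$ is identically zero; this is strictly stronger than the scalar condition \eqref{s.eq} appearing in Theorem \ref{gen.theorem} and is really the only interpretive subtlety in the statement. Given this identity, substituting $w = e^z$ gives $Q(e^z) \equiv 0$, so the bracket above vanishes identically. Hence $f$ solves \eqref{NPDE1}, and since $f(z) = Ce^{\lambda z}$ is entire of order at most one it is a finite order entire solution, which simultaneously establishes the ``moreover'' clause about the form of $f$.

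I do not anticipate any technical obstacle; the only care needed is to read the hypothesis as a polynomial identity in $z$ rather than a pointwise equation in a single variable $\lambda$. As an alternative route, one could deduce the corollary from Theorem \ref{gen.theorem} by taking $u\equiv 1$, so that the row vector $(u,tu',\ldots,t^nu^{(n)})$ collapses to $(1,0,\ldots,0)$ and the matrix condition \eqref{ration2.eq} reduces, via the standard identity $\lambda^j = \sum_i S(j,i)\,\lambda^{\underline{i}}$ relating the Stirling numbers of the second kind to falling factorials, to exactly the hypothesis here; but the direct computation sketched above is shorter and more transparent.
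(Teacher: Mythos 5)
Your proposal is correct. Note that the paper gives no standalone proof of this corollary: it is presented as a consequence of Theorems \ref{gen.theorem} and \ref{gen1.theorem}, the implicit route being to take $u\equiv 1$ in Theorem \ref{gen.theorem} and unwind the matrix condition \eqref{ration2.eq} via the identity $\lambda^{j}=\sum_{i}m_{i,j}\lambda^{\underline{i}}$ relating powers to falling factorials, exactly as you sketch in your closing remark. Your primary argument --- direct substitution of $f(z)=Ce^{\lambda z}$, factoring out $Ce^{\lambda z}$, and observing that the hypothesis is the identical vanishing of $Q(w)=\lambda^{n}+P_{n-1}(w)\lambda^{n-1}+\cdots+P_{0}(w)$ so that $Q(e^{z})\equiv 0$ --- is more elementary and entirely self-contained; it avoids the Frobenius/indicial-equation machinery altogether and makes the ``moreover'' clause automatic. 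You are also right to flag the one interpretive point, namely that the displayed hypothesis must be read as a polynomial identity in $z$ (equivalently in $w$, since $e^{z}$ omits only $0$ and $Q$ is a polynomial), which is the reading the paper intends, as its Example involving \eqref{example2.eq} confirms. What the theorem-based route buys is uniformity with the rest of the paper's framework and the connection to \eqref{s.eq}; what your direct route buys is brevity and transparency. Either is acceptable, and your write-up contains no gap.
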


In the following, we show the representation of finite order solutions of \eqref{NPDE} if $\gamma\in\N_+\cup\{0\}$.

\begin{theorem}\label{gen.theorem.mod}
Every finite order solution of linear differential equation \eqref{NPDE} is an exponential sum.
\end{theorem}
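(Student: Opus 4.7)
Since $\gamma=0$ is already Theorem~\ref{gen1.theorem}, and the linear change of variable $z\mapsto\lambda' z$ reduces \eqref{NPDE} to \eqref{NPDE1} with $\lambda'=1$, it suffices to treat \eqref{NPDE1} when $\gamma\ge 1$. The plan is to substitute $w=e^z$, put $\phi(w):=f(\log w)$, turn \eqref{NPDE1} into a polynomial-coefficient linear ODE in $w$, and extract the possible forms of $\phi$ from the classical theory of singular points, using the finite-order hypothesis on $f$ to rule out exponentially-growing local solutions.

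Using the Stirling-number identity $\frac{d^n}{dz^n}=\sum_{k=1}^{n} S(n,k)\,w^k\frac{d^k}{dw^k}$ already underlying Theorem~\ref{gen.theorem}, equation \eqref{NPDE1} becomes
$$
\sum_{k=1}^{n} S(n,k)\,w^{\gamma+k}\phi^{(k)}(w)+\sum_{m=1}^{n-1} P_m(w)\sum_{k=1}^{m} S(m,k)\,w^k\phi^{(k)}(w)+P_0(w)\phi(w)=0,
$$
a linear ODE with polynomial coefficients whose only candidate singular points are $w=0$ and $w=\infty$. When $\gamma\ge 1$, the leading coefficient $w^{\gamma+n}$ vanishes too rapidly at $w=0$ for that point to be a regular singularity, so $w=0$ is an \emph{irregular} singular point; substituting $\phi=w^{\lambda}$ and using $\sum_k S(m,k)\lambda^{\underline{k}}=\lambda^m$, the leading balance at $w=0$ yields the reduced indicial equation $P_0(0)+P_1(0)\lambda+\cdots+P_{n-1}(0)\lambda^{n-1}=0$ of degree at most $n-1$.

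The crux is to translate the finite-order hypothesis on $f(z)$ into sharp growth bounds on $\phi(w)$ at both singular points. By the Hukuhara--Turrittin classification of formal solutions at an irregular singular point, each basis element has the shape $e^{Q_j(w^{-1/p})}\,w^{\lambda_j}(\log w)^{q}\bigl(1+O(w^{1/p})\bigr)$ for some polynomial $Q_j$; if $Q_j\not\equiv 0$ then composing with $w=e^z$ produces a factor $\exp Q_j(e^{-z/p})$ of double-exponential growth in $z$, contradicting $\rho(f)<\infty$. The analogous argument at $w=\infty$ forbids nontrivial exponential factors there as well. Hence $\phi$ is a linear combination of purely algebraic Frobenius-type expansions $w^{\lambda_j}(\log w)^{q}\sum_{k\ge 0}a_k w^k$ with $\lambda_j$ an indicial exponent; because the only other singular point is $w=\infty$, these series converge for all $w\ne 0$, and the polynomial-growth requirement at $w=\infty$ (again forced by finite order of $f$) makes them terminate. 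Substituting back gives $f(z)=\sum_{j,q}e^{\lambda_j z}z^{q}R_{j,q}(e^z)$, an exponential polynomial and hence an exponential sum in the sense of \eqref{expsum.eq}.

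The main obstacle is precisely this translation step: rigorously eliminating every nontrivial exponential factor at both singular points requires a careful local analysis adapted to the specific vanishing order of the leading coefficient $w^{\gamma+n}$, together with a quantitative link between the order $\rho(f)$ of $f$ in $z$ and the admissible growth of $\phi$ as $w\to 0$ and $w\to\infty$. Once this translation is in place, the decomposition described above, combined with the identities from Theorem~\ref{gen.theorem}, gives the required exponential-sum form of every finite-order solution.
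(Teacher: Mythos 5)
Your overall strategy --- pass to $w=e^{z}$, classify the local solutions at the two singular points $w=0$ and $w=\infty$, and use the finite order of $f$ to kill the exponential parts --- is reasonable in outline, but the step you yourself flag as ``the main obstacle'' is exactly where the proof is missing, and the surrounding claims are not correct as stated. Two concrete problems. First, at an irregular singular point the Hukuhara--Turrittin expansions are \emph{formal}: even when the exponential part $Q_j$ is trivial, the accompanying power series $\sum_k a_kw^k$ is in general divergent, and an actual solution is only asymptotic to such an expansion in sectors, with different combinations appearing in different sectors (Stokes phenomenon). So the assertion ``these series converge for all $w\neq 0$ because the only other singular point is $w=\infty$'' is unjustified; that radius-of-convergence argument is valid for Frobenius series at a \emph{regular} singular point, which $w=0$ is not when $\gamma\geq 1$. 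Second, the translation between $\rho(f)<\infty$ and the growth of $\phi$ is direction-dependent and obstructed by multivaluedness: $\phi(w)=f(\log w)$ lives on the universal cover of $\mathbb{C}\setminus\{0\}$ (it is single-valued only if $f$ is $2\pi i$-periodic), and since $|w|=e^{\operatorname{Re}z}$ the bound $|f(z)|\leq e^{C|z|}$ yields polynomial bounds on $|\phi|$ only along directions where $|z|\asymp|\operatorname{Re}z|$; on the lift of the circle $|w|=1$ the function $\phi$ is unbounded. Hence neither ``no exponential factor at $0$ or $\infty$'' nor ``polynomial growth at $\infty$ forces termination'' follows from $\rho(f)<\infty$ without a genuinely sectorial analysis, and because $f$ is an unknown linear combination of basis solutions one must also rule out cancellation among dominant exponential terms sector by sector. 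None of this is carried out.

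For comparison, the paper avoids these difficulties by staying in the $z$-plane. It first shows, by the Wiman--Valiron argument of Lemma \ref{order.lemma}, that a finite order solution has order exactly $1$ and mean type; it then invokes the Floquet-type representation for linear equations with $2\pi i$-periodic coefficients (Ince, \S 15.7), namely $f=e^{\tilde\lambda z}\sum_i\tilde\mu_i(z)\psi_i(z)$ with $\tilde\mu_i$ periodic and $\psi_i$ polynomials; the difference operator $\Delta g(z)=g(z+2\pi i)-g(z)$ isolates each $\tilde\mu_i$ and shows it is entire of order at most $1$ and at most mean type; finally Gross's theorem on periodic entire functions of finite order forces $\tilde\mu_i(z)=\sum_{k=-n_i}^{n_i}c_{ik}e^{kz}$. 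This replaces your local asymptotic analysis at an irregular singularity by a global growth argument for periodic entire functions, which is precisely where the Stokes and multivaluedness issues disappear. To salvage your route you would need the full Hukuhara--Turrittin and Stokes machinery together with a sector-by-sector comparison of $\log|\phi|$ against the indicator of $f$; as it stands, the proposal identifies the right obstacle but does not overcome it.
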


The following preliminary examples illustrate theorem \ref{gen.theorem} and theorem \ref{gen1.theorem} for third order linear differential equations.

\begin{example}
The function $f(z)=e^{-\frac{4}{3}z}(1-7e^{z})$ solves the differential equation
    $$
    f'''+3e^zf''+\left(-\frac{4}{3}-2e^z\right)f'-\left(e^z-\frac{16}{27}\right)f=0.
    $$
Obviously a polynomial $u(t)=1-7t$ is a solution of the differential equation
    $$
    t^2u'''+t(3t-1)u''+(1-7t)u'+7u=0
    $$
when $t=e^z$, where $\lambda=-4/3$ is a root of
    $$
    \lambda^3-4/3\lambda+16/27=(\lambda+4/3)(\lambda-2/3)^2=0.
    $$
\end{example}

\begin{example}\cite[Example 9.2]{GS1994}
The entire function $f=e^{-iz}+e^{-z}$ is a solution
of the equation
    \begin{equation}\label{exmp1.eq}
    f'''+(e^z+1)f''+((1+i)e^z+1)f'+(ie^z+1)f=0.
    \end{equation}
Obviously, it is known that $\lambda=-i$ and $\lambda=-1$ are the solutions of
    $$
    \lambda^3+\lambda^2+\lambda+1=0.
    $$
In fact, $f_1=e^{-iz}$ and $f_2=e^{-z}$ of the form \eqref{arb.sol} are linearly independent solutions of \eqref{exmp1.eq}. From Frei's theorem,
there are at most two linearly independent solutions
of \eqref{exmp1.eq}. Hence, every finite order solution of
\eqref{exmp1.eq} is a linear combination of $f_1$ and $f_2$.
\end{example}

\begin{example}
Consider the differential equation
    \begin{equation}\label{example2.eq}
    f'''-(3+e^z)f''+(3+2e^z)f'-(1+e^z)f=0.
    \end{equation}
It is clear that $\lambda=1$ satisfies
    $$
    \lambda^3-(3+t)\lambda^2+(3+2t)\lambda-(1+t)=0.
    $$
Form Corollary \ref{one.coro}, there exists a finite order entire solution of \eqref{example2.eq}.
Obviously, it is known that $f_1=e^z$ and $f_2=ze^z$ are two linearly independent solutions of \eqref{example2.eq}. Moreover, from Frei's theorem, there exist at most two finite order linearly independent solutions of \eqref{example2.eq}.
\end{example}

The next example shows that there does not exist any finite order solution of a linear differential equation from Theorem \ref{gen.theorem}.

\begin{example}
Let us consider a second order differential equation
    \begin{equation}\label{ex.eq}
    f''+f'+e^zf=0.
    \end{equation}
By \cite[Corollary~1]{Gundersen1988} it follows that every solution $f\not\equiv 0$ of \eqref{ex.eq} has infinite order. In the following, we use different method, that is Theorem~\ref{gen.theorem}, to illustrate there exists no non-trivial finite order solution of \eqref{ex.eq}.

Now we set $t=e^z$ and $f(z(t))=v(t)$. Then $v$ satisfies
    $$
    tv''+2v'+v=0.
    $$
Let us choose $\lambda$ such that $\lambda$ is a solution of
    $
    \lambda^2+\lambda=0.
    $
Hence $\lambda=-1$ or $\lambda=0$. By setting $v=t^\lambda u$, we get
$u$ is a solution of
    $$
    tu''+u=0
    $$
or
    $$
    tu''+2u'+u=0.
    $$
In either case, $u$ is not a non-zero polynomial. Therefore, there does not any finite order solution of \eqref{ex.eq} by Theorem \ref{gen.theorem}.

\end{example}

\section{Formal solutions of linear differential equations}\label{power.sec}

In order to seek all possible finite order solutions of
\eqref{NPDE1}, let us recall the fundamental results on formal solutions of linear differential equations
    \begin{equation}\label{LEDR.eq}
    v^{(n)}+a_{n-1}v^{(n-1)}+\cdots+a_1v'+a_0v=0,
    \end{equation}
where $a_i$ are meromorphic functions for $i=0,1,\ldots,n-1$. The point $t=t_0$ is called an \emph{ordinary point} of \eqref{LEDR.eq} if $t=t_0$ is not a pole of any of $a_i$ for $i=0,1,\ldots,n-1$. All solutions of \eqref{LEDR.eq} are entire for every given $t\in\C$ is an ordinary point of \eqref{LEDR.eq}. We call
$t=t_0$ a \emph{singularity of the first kind} for \eqref{LEDR.eq} if $a_k$ has at most a pole of $n-k$-th order
in the domain $|t-t_0|<\delta$, where $k=0,1,\ldots,n-1$. Moreover, we say $t=t_0$ is a \emph{regular singular point} of \eqref{LEDR.eq} if all solutions do not grow faster than some inverse powers of $|t-t_0|$, as $t\to t_0$ in $|\arg (t-t_0)|\leq \pi$. Fuchs showed that
the point $t=t_0$ is a regular singular point for \eqref{LEDR.eq} if and only if it is a singularity of the first kind, see \cite{CL,Singer} for more details.
The point $t=t_0$ is called
 \emph{singularity of the second kind} for \eqref{LEDR.eq}
if there exists at least one term $a_k$ such that $a_{k}{(t-t_0)}^{n-k}$ is not analytic at $t_0$, where $0\leq k\leq n-1$.

Firstly, we focus on the regular singular point case. In order to seek for the formal solutions of \eqref{LEDR.eq}, let us consider the origin as a regular singular point. Then we assume that the differential equation \eqref{LEDR.eq} is of the form
    \begin{equation}\label{L.eq}
    t^nv^{(n)}+t^{n-1}b_1v^{(n-1)}+\cdots+b_nv=0,
    \end{equation}
 where $b_j$ are entire functions.
The \emph{indicial equation} associated with \eqref{L.eq} is
    \begin{equation}\label{indicial}
    Q(\lambda)=\lambda^{\underline{n}}+b_{1,0}\lambda^{\underline{n-1}}
    +\cdots+b_{n-1,0}\lambda+b_{n,0}=0,
    \end{equation}
where $\lambda^{\underline{n}}=\lambda(\lambda-1)\cdots(\lambda-n+1)$ and $b_j=\sum_{k=0}^\infty b_{j,k}t^k$ for $j=1,2,\ldots,n$.
Assuming that a set $E:=\{\lambda_1,\ldots,\lambda_n\}$ consists of all the roots of \eqref{indicial}, we divided the set $E$ into $m$ subsets $E_j~(1\leq j\leq m)$ such that the difference of each two elements is an integer in one subset and is a non-integer in distinct subsets. The number of elements in each set $E_j$ is denoted by $k_j$ for $j=1,\ldots,m$. It is clear that $k_1+\cdots+k_m=n$. We rename the elements in each subset as follows
    $$
    E_j:=\{\lambda_{j,1},\ldots,\lambda_{j,k_j}\}\quad\text{for}\quad j=1,2,\ldots,m.
    $$
Moreover, the elements in each set are ordered according to increasing moduli.

It is well known that there exist $k_j$ linearly independent solutions of \eqref{L.eq} which associate with the elements in each subset $E_j$.
The Frobenius method (e.g., see \cite[pp.~132-135]{CL}, \cite[pp.~396-402]{Ince}), or an alternative method of descending order of equations (see \cite[p.~364]{Ince}, \cite[Theorem 4.8.12]{Schafke-Schmidt}) shows that these $k_j$ linearly independent solutions $v_{j,\kappa}$ which associate with the roots $\lambda_{j,\kappa}$ are of the form
    \begin{equation}\label{ff.eq}
    v_{j,\kappa}=t^{\lambda_{j,1}}(\varphi_{j,\kappa,0}+\varphi_{j,\kappa,1}\log t+\cdots+\varphi_{j,\kappa,\kappa-1}(\log t)^{\kappa-1}),
    \end{equation}
where $\varphi_{j,\kappa,i}$ are entire functions for $\kappa=1,\ldots,k_j$ and $i=0,\ldots,\kappa-1$. In this way, we can find $n$ linearly independent solutions of \eqref{L.eq} $v_{1,1},\ldots,v_{1,k_1},\ldots,v_{m,1},\ldots,v_{m,k_m}$, where
$k_1+\cdots+k_m=n$.

For the general case, we focus on the differential equation of the form
    \begin{equation}\label{L2.eq}
    t^nv^{(n)}+t^{n-1}b_1v^{(n-1)}+\cdots+b_nv=0,
    \end{equation}
 where at least one of $b_j~(j=0,1,\ldots,n-1)$ is not entire.
From \cite[\S  15.23]{Ince}, the formal solution of \eqref{L2.eq} is
    \begin{equation}\label{ff2.eq}
    v_{j,\kappa}=t^{\lambda_{j}}(\varphi_{j,\kappa,0}+\varphi_{j,\kappa,1}\log t+\cdots+\varphi_{j,\kappa,\kappa-1}(\log t)^{\kappa-1}),
    \end{equation}
where at least one of $\varphi_{j,\kappa,i}$ is not entire with an essential singularity at 0 for $\kappa=1,\ldots,k_j$ and $i=0,\ldots,\kappa-1$, and $\lambda_j$
is a root of a $n$-th order algebraic equation.

\section{Proofs of Theorems}\label{proof.se}

In this section, we proceed to prove our theorems one by one.
\subsection{Proof of Theorem \ref{gen.theorem}}
Suppose that $f$ is a solution of \eqref{NPDE1}. Setting $t=e^z$ and $f(z(t))=v(t)$, we have
    $$
    \left(\begin{matrix}
    f, &\frac{df}{dz},& \cdots ,&,\frac{d^nf}{dz^n}
    \end{matrix}\right)=
     \left(\begin{matrix}
    v, & t\frac{dv}{dt} , &\cdots&,t^n\frac{d^nv}{dt^n}
    \end{matrix}\right)
    \left(\begin{matrix}
    m_{0,0} & m_{0,1} \cdots & m_{0,n}\\
    \vdots &\vdots &\vdots \\
    m_{n,0} & m_{n,1} \cdots & m_{n,n}
    \end{matrix}\right),
    $$
where $(m_{i,j})$ is a $(n+1)\times (n+1)$ matrix such that $m_{0,0}=1$, $m_{i,0}=0$, $m_{0,j}=0$ and
$m_{ij}=im_{i,j-1}+m_{i-1,j-1}$.
Then \eqref{NPDE1} is transformed into
    \begin{equation}\label{t3.eq}
     \left(\begin{matrix}
    v, &tv', &\cdots, &t^nv^{(n)}
    \end{matrix}\right)
        \left(\begin{matrix}
    m_{0,0} & m_{0,1} \cdots & m_{0,n}\\
    \vdots &\vdots &\vdots \\
    m_{n,0} & m_{n,1} \cdots & m_{n,n}
    \end{matrix}\right)
    \left(\begin{matrix}
    P_0\\
    \vdots\\
    P_n
    \end{matrix}\right)=0,
    \end{equation}
where $v'=dv/dt$ and $P_n=1$. It is easy for us to write \eqref{t3.eq} as
    \begin{equation}\label{alpha.eq}
    \alpha_n(t)t^nv^{(n)}+\alpha_{n-1}(t)t^{n-1}v^{(n-1)}+\cdots+
    \alpha_{0}(t)v=0,
    \end{equation}
where $\alpha_i(t)$ is a polynomial for $i=0,1,\ldots,n$ such that
    $$
    \alpha_i(t)=\sum_{j=0}^nm_{i,j}P_j(t).
    $$
Obviously, every singularity of \eqref{alpha.eq} is a regular singular point since $\alpha_n(t)=1$. According to Section \ref{power.sec}, we know that the indicial equation associated with \eqref{alpha.eq} is
    \begin{equation}\label{lambda.eq}
    \lambda^{\underline{n}}+\alpha_{n-1}(0)\lambda^{\underline{n-1}}
    +\cdots+\alpha_1(0)\lambda+\alpha_0(0)=0,
    \end{equation}
where $\lambda^{\underline{n}}=\lambda(\lambda-1)\cdots(\lambda-n+1)$. Keeping in mind that $m_{i,j}$ are the Stirling numbers of the second kind such that
    \begin{equation}\label{m.eq}
    \lambda^k=\sum_{i=1}^{k} m_{i,k}\lambda^{\underline{i}}
    \end{equation}
for any $k\in\N$. It shows us that
    $$
    \sum_{i=0}^n\alpha_i(0)\lambda^{\underline{i}}=\sum_{i=0}^n\sum_{j=0}^nm_{i,j}P_j(0)\lambda^{\underline{i}}
    =\sum_{j=0}^n\sum_{i=0}^nm_{i,j}P_j(0)\lambda^{\underline{i}}=\sum_{j=0}^nP_j(0)\sum_{i=0}^nm_{i,j}\lambda^{\underline{i}}
    =\sum_{j=0}^nP_j(0)\lambda^j.
    $$
We note that the last equality is from \eqref{m.eq} and $m_{i,j}=0$ for $i>j$. Therefore, the indicial equation associated with \eqref{alpha.eq} is
    $$
    \lambda^n+P_{n-1}(0)\lambda^{n-1}+\cdots+P_1(0)\lambda+P_0(0)=0.
    $$
We follow the idea of Fuchs, that is, substitute a series to the equation and proceeding to a determination of the coefficients, see \cite{Hille}. We claim that \eqref{t3.eq} has a formal solution
    \begin{equation}\label{form.eq}
    v(t)=t^\lambda\sum_{k=0}^\infty \gamma_kt^k=t^\lambda u(t),
    \end{equation}
where $\lambda\in\C$. It is obvious that $u$ is analytic in $t\in\C$ and $v$ is analytic in $t\in\C\setminus\{0\}$. Substituting \eqref{form.eq} into \eqref{t3.eq}, we have
$u$ is an entire solution of
    \begin{equation}\label{u.eq}
     \left(\begin{matrix}
    u, &tu' & , &\cdots, t^nu^{(n)}
    \end{matrix}\right)
    \left(\begin{matrix}
     q_{0,0} & q_{0,1} \cdots & q_{0,n}\\
    \vdots &\vdots &\vdots \\
    q_{n,0} & q_{n,1} \cdots & q_{n,n}
   \end{matrix}\right)
        \left(\begin{matrix}
    m_{0,0} & m_{0,1} \cdots & m_{0,n}\\
    \vdots &\vdots &\vdots \\
    m_{n,0} & m_{n,1} \cdots & m_{n,n}
    \end{matrix}\right)
    \left(\begin{matrix}
    P_0\\
    \vdots\\
    P_n
    \end{matrix}\right)=0,
    \end{equation}
where $(q_{i,j})$ is a $(n+1)\times(n+1)$ matrix such that $q_{i,j}=C_j^i\lambda^{\underline{j-i}}$ for $0
\leq i\leq j\leq n$ and $q_{i,j}=0$ for $0\leq j<i\leq n$.

In the following, we proceed to prove $u(t)$ is a polynomial if and only if $f(z)=v(t(z))$ in \eqref{form.eq} is of finite order, where $t=e^z$.
It is easy to see that $f(z)$ is of finite order when $u(t)$ is a polynomial. Now let us assume that $f(z)$ is a finite order entire solution of \eqref{NPDE1} when $\gamma=0$. From \cite[Page 388]{Helmrath-Nikolaus}, we obtain that every transcendental entire solution $u(t)$ of \eqref{u.eq} is of rational order $\rho(u)$ such that $0<\rho(u)<\infty$. It shows that $u(e^z)$ is of infinite order provided that
$u(t)$ is of positive order by P\'{o}lya's theorem \cite{Polya}, which is impossible. The only possibility is $u(t)$ is a polynomial when $f(z)$ is of finite order, where $t=e^z$.

\subsection{Proof of Theorem \ref{gen1.theorem}}

In this subsection, we use the same notations in the proof of Theorem \ref{gen.theorem}. According to \eqref{ff.eq}, we know that formal solutions $f_{j}$ of \eqref{NPDE1} are of the form
    \begin{equation}\label{fTheorem2.eq}
    f_{j}(z)=e^{\lambda_{j}z}(\mu_{j,0}(e^z)+\mu_{j,1}(e^z)\psi_1(z)+\cdots+\mu_{j,\nu_j}(e^z)\psi_{\nu_j}(z))
    \end{equation}
for $j=1,2,\ldots,n$, where $\mu_{j,i}$ are entire functions and $\psi_i$ are polynomials such that $\deg(\psi_i)=i$, and $\lambda_{j}$ is a root of \eqref{lambda.eq}.

We claim that
if $f$ is a finite order entire solution
of \eqref{NPDE1}, then $f$ is an exponential sum in the case of $\gamma=0$. Let us state two lemmas to prove our assertion.

\begin{lemma}\label{order.lemma}
Suppose that $\gamma=0$ and $f$ is a finite order transcendental entire solution of \eqref{NPDE1}. Then $f$ is of order $\rho(f)=1$ and mean type.
\end{lemma}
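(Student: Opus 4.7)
The plan is to combine the change of variables $t=e^z$ used in the proof of Theorem~\ref{gen.theorem} with the Frobenius-Fuchs description of formal solutions at a regular singular point recalled in Section~\ref{power.sec}, and then read off the growth of $f$ from the resulting closed-form expression. First I would repeat the substitution $t=e^z$, $v(t)=f(z)$ from the proof of Theorem~\ref{gen.theorem}. When $\gamma=0$ the transformed equation~\eqref{alpha.eq} has polynomial coefficients with $\alpha_n\equiv 1$, so $t=0$ is a regular singular point whose indicial polynomial is exactly~\eqref{s.eq}. The Frobenius-Fuchs theorem then supplies a basis of formal solutions of the form~\eqref{ff.eq},
\[
v_{j,\kappa}(t)=t^{\lambda_{j,1}}\sum_{i=0}^{\kappa-1}\varphi_{j,\kappa,i}(t)(\log t)^{i},
\]
with each $\varphi_{j,\kappa,i}$ entire in $t$. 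Translating back via $z=\log t$ yields
\[
f(z)=\sum_{j,\kappa}C_{j,\kappa}\,e^{\lambda_{j,1} z}\sum_{i=0}^{\kappa-1}z^{i}\varphi_{j,\kappa,i}(e^z)
\]
for some constants $C_{j,\kappa}\in\C$.

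Next I would argue, using the finite order of $f$, that every $\varphi_{j,\kappa,i}$ entering this sum with $C_{j,\kappa}\neq 0$ is a polynomial in $t$. The top coefficient $\varphi_{j,\kappa,\kappa-1}$ satisfies a homogeneous equation of the type~\eqref{u.eq} associated with $\lambda_{j,1}$, so by the theorem of Helmrath and Nikolaus invoked in the proof of Theorem~\ref{gen.theorem} any transcendental entire solution would have positive rational order, and P\'olya's theorem would then force $z^{\kappa-1}\varphi_{j,\kappa,\kappa-1}(e^z)$ to be of infinite order in $z$. Linear independence of the Frobenius basis $\{v_{j,\kappa}\}$ prevents such an infinite-order contribution from cancelling against the remaining summands of $f$, contradicting $\rho(f)<\infty$. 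Downward induction in $i$ then disposes of $i<\kappa-1$: $\varphi_{j,\kappa,i}$ satisfies an inhomogeneous equation whose forcing is polynomial (built from the already polynomial $\varphi_{j,\kappa,i+1},\ldots,\varphi_{j,\kappa,\kappa-1}$), and any transcendental entire component of its solution is again ruled out by the Helmrath-Nikolaus-plus-P\'olya argument applied to the homogeneous part.

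Once all $\varphi_{j,\kappa,i}$ are polynomials, $f$ is a finite exponential polynomial $\sum_\ell Q_\ell(z)e^{\mu_\ell z}$ with $Q_\ell\in\C[z]$ and frequencies $\mu_\ell$ in the finite set $\{\lambda_{j,1}+k:k\in\N\cup\{0\}\}$. The elementary estimate $\log|f(re^{i\theta})|\leq(\max_\ell|\mu_\ell|)\,r+O(\log r)$ gives $\rho(f)\leq 1$ and finite type. Transcendence of $f$ forces at least one $\mu_\ell\neq 0$, and choosing $\theta$ so that $\mathrm{Re}(\mu_\ell e^{i\theta})$ is maximal produces a linear lower bound on $\log|f(re^{i\theta})|$ along that ray; hence $\rho(f)=1$ and $f$ is of mean type.

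The \textbf{main obstacle} is the inductive step that propagates polynomiality through the $(\log t)$-levels: one must show that finite order of $f$ forces each individual $\varphi_{j,\kappa,i}$, not merely a combination, to be polynomial. This relies on a linear-independence statement for the Frobenius basis elements $v_{j,\kappa}$ (so that no infinite-order contribution is hidden by cancellation inside $f$) and on an adaptation of the Helmrath-Nikolaus bound from the homogeneous equation~\eqref{u.eq} to the inhomogeneous equations that govern $\varphi_{j,\kappa,i}$ for $i<\kappa-1$.
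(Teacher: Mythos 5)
Your route is entirely different from the paper's, and it does not close. The paper proves this lemma directly by Wiman--Valiron theory: it substitutes $f^{(i)}/f=(\nu(r)/z)^i(1+o(1))$ into \eqref{NPDE1} to obtain
$\nu^n(r)+P_{n-1}(e^z)z\nu^{n-1}(r)(1+o(1))+\cdots+P_0(e^z)z^n(1+o(1))=0$
outside a set of finite logarithmic measure, rules out $\nu(r)\sim Cr^{\rho_0}$ with $\rho_0\neq 1$ by comparing the dominant terms (separately in the left and right half-planes), and concludes $\nu(r)=Cr(1+o(1))$, hence order $1$ and mean type. You instead try to derive the growth from the Frobenius representation, which amounts to proving Theorem~\ref{gen1.theorem} first and reading the lemma off as a corollary. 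That inverts the paper's logic: the conclusion of this lemma (order $1$ and mean type) is precisely the \emph{hypothesis} that Lemma~\ref{finite.lemma} needs in order to control the coefficient functions via the difference operator $\Delta$ and the Chiang--Feng estimate.

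The obstacle you flag at the end is a genuine gap, not a technicality. ``Linear independence of the Frobenius basis prevents such an infinite-order contribution from cancelling'' is false as a general principle: $g_1=e^{e^z}+e^z$ and $g_2=-e^{e^z}$ are linearly independent, each of infinite order, yet $g_1+g_2$ has order $1$. To isolate the contribution of an individual $v_{j,\kappa}$ inside $f$ one needs an operator acting triangularly on the basis --- in the paper this is the shift $z\mapsto z+2\pi i$ and the induced operator $\Delta$ --- and to bound the growth of the isolated pieces one needs a priori growth information on $f$, which is exactly what this lemma is supposed to supply and what Wiman--Valiron provides. The downward induction over the $(\log t)$-levels has the same problem in sharper form: the Helmrath--Nikolaus result applies to the homogeneous equation \eqref{u.eq}, and you give no argument that a particular solution of the inhomogeneous equation governing $\varphi_{j,\kappa,i}$ for $i<\kappa-1$ can be taken polynomial, nor that a transcendental homogeneous part cannot cancel inside the full sum. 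As written, the proposal does not prove the lemma.
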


\begin{proof}
Let $f$ be a transcendental entire solution of \eqref{NPDE1}
and let $\nu(r)$ be its central index. By Wiman-Valiron theory, see \cite[Theorem 30]{Valiron} or \cite[Theorem 12]{Hayman}, let $F\subset\R_+$ be a set of finite logarithmic measure such that
    \begin{equation}\label{nu.eq}
    \frac{f^{(i)}(z)}{f(z)}=\left(\frac{\nu(r)}{z}\right)^i(1+o(1))
    \end{equation}
holds for $i=1,\ldots,n-1$ and for $r=|z|\not\in F$, $z$ being chosen as
    \begin{equation}\label{bigpoint.eq}
    |f(z)|>M(r,f)\nu(r,f)^{-1/4+\delta},
    \end{equation}
where $0<\delta<1/4$. Substituting \eqref{nu.eq} into \eqref{NPDE1}, we have
    \begin{equation*}\label{v.eq}
    \left(\frac{\nu(r)}{z}\right)^n+P_{n-1}(e^z)\left(\frac{\nu(r)}{z}\right)^{n-1}
    (1+o(1))+\cdots+P_0(e^z)(1+o(1))=0
    \end{equation*}
holds for $r=|z|\not\in F$. Therefore, it yields that
    \begin{equation}\label{v1.eq}
    \nu^n(r)+P_{n-1}(e^z) z\nu^{n-1}(r)(1+o(1))+\cdots+P_0(e^z) z^n(1+o(1))=0
    \end{equation}
for $r=|z|\not\in F$. If $f$ is of finite order $\rho(f)=\rho$, then $\nu(r)\leq r^{\rho+\varepsilon}$ for large $r$. It shows us that $\nu(r)$ is nondecreasing and at most polynomial growth.

We claim that $\nu(r)= Cr(1+o(1))$ holds as $r\not\in F$, where $C$ is a nonzero constant. Suppose that there exists a set with infinite logarithmic measure $E$ such that
    \begin{equation}\label{assume.eq}
    \nu(r_n)= Cr_n^{\rho_0}(1+o(1))
    \end{equation}
for any $r_n\in E$ with $r_n\to\infty$ as $n\to\infty$, where $\rho_0\neq 1$. It is well known that $|P_j(e^z)|$ are bounded as $z\to\infty$ for $j=0,1,\ldots,n-1$ when $z$ satisfying \eqref{bigpoint.eq} are in the left-half plane. Obviously, it is impossible that \eqref{v1.eq} and \eqref{assume.eq} hold together. The only possibility is that $z$ satisfying \eqref{bigpoint.eq} are in the right-half plane. We denote the index $\{d_1,d_2,\ldots,d_j\}\subset\{0,1,\ldots,n-1\}$ such that $\deg{P_{d_1}}=\deg{P_{d_2}}=\cdots=\deg{P_{d_j}}>\deg{P_k}$
for $k\not\in\{d_1,d_2,\ldots,d_j\}$. It yields that
   \begin{equation*}\label{v2.eq}
    \bigg|\sum_{d=d_1}^{d_j}P_d(e^z)z^{n-d}\nu(r)^d(1+o(1))\bigg|
    \leq
    \nu^n(r)+\sum_{d\not\in\{d_1,d_2,\ldots,d_j\}}P_d(e^z)z^{n-d}\nu(r)^d (1+o(1)),
    \end{equation*}
which is impossible if \eqref{assume.eq} holds.
Therefore, we have $\nu(r)= Cr(1+o(1))$ as $r\not\in F$, where $C$ is a nonzero constant. Then we conclude that
$\nu(r)= Cr(1+o(1))$ holds for large $r$ without any exceptional set from \cite{Helmrath-Nikolaus}. Hence, $f$ is of order 1 and mean type by $\nu(r)\leq M(r,f)$ and \cite[Theorem 6]{Hayman}.
\end{proof}

\begin{lemma}\label{finite.lemma}
Suppose that $g$ is an entire function of the form
    \begin{equation}\label{fu.eq}
    g=\mu_1(e^z)\psi_1(z)+\cdots+\mu_\nu(e^z)\psi_\nu(z),
    \end{equation}
where $\mu_i$ are entire functions and $\psi_i$ are polynomials such that $\deg(\psi_1)<\cdots<\deg (\psi_\nu)$.
If $g$ is of order $\rho(g)=1$ and mean type, then $\mu_j(z)$ are polynomials for $j=1,\ldots,\nu$.
\end{lemma}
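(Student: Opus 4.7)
The plan is to apply a finite-difference operator that exploits the $2\pi i$-periodicity of $e^z$, reducing the polynomial multipliers until only a single summand survives, and then induct on $\nu$. Define $\Delta f(z) := f(z+2\pi i) - f(z)$. Since $e^{z+2\pi i}=e^z$, one has $\Delta\bigl(\mu(e^z)\,p(z)\bigr) = \mu(e^z)\bigl(p(z+2\pi i)-p(z)\bigr)$ for any polynomial $p$, so $\Delta$ acts only on the polynomial factor of each summand, lowering its degree by one when that degree is positive and annihilating constants. The operator also preserves the class of entire functions of order at most $1$ and mean type: if $|g(z)|\le e^{\sigma|z|}$ for large $|z|$, then $|\Delta g(z)|\le 2e^{\sigma(|z|+2\pi)}$.

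Setting $d_j := \deg\psi_j$, so that $d_1 < d_2 < \cdots < d_\nu$, iteration yields $\Delta^{d_\nu}\psi_j\equiv 0$ for $j<\nu$ while $\Delta^{d_\nu}\psi_\nu = d_\nu!(2\pi i)^{d_\nu}a_\nu\neq 0$, where $a_\nu$ is the leading coefficient of $\psi_\nu$. Hence
\begin{equation*}
\Delta^{d_\nu}g(z) = c\,\mu_\nu(e^z), \qquad c := d_\nu!(2\pi i)^{d_\nu}a_\nu \neq 0.
\end{equation*}
By the preceding paragraph, the left-hand side is of order at most $1$ and mean type, and therefore so is $\mu_\nu(e^z)$. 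Invoking P\'olya's theorem (in the same spirit as its use in the proof of Theorem~\ref{gen.theorem}) through the quantitative inequality bounding $M(r,\mu_\nu(e^z))$ from below by $M(c'e^{r/2},\mu_\nu)$ forces $\log M(s,\mu_\nu) = O(\log s)$, which means $\mu_\nu$ is a polynomial.

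The lemma then follows by induction on $\nu$: once $\mu_\nu$ is known to be a polynomial, the function $\mu_\nu(e^z)\psi_\nu(z)$ is an exponential polynomial of order $1$ mean type, and subtracting it from $g$ leaves $\sum_{j=1}^{\nu-1}\mu_j(e^z)\psi_j(z)$, which is still of order at most $1$ and mean type, has one fewer summand, and satisfies the same structural hypotheses. The main technical subtlety is precisely the final conversion from ``$\mu_\nu(e^z)$ has finite growth'' to ``$\mu_\nu$ is a polynomial'', rather than the weaker conclusion ``$\mu_\nu$ has order zero''; it is handled cleanly by exploiting the mean-type bound inside the P\'olya inequality, which is why the hypothesis that $g$ is of mean type (and not merely of finite order) is used in an essential way.
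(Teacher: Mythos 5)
Your proof is correct and follows essentially the same route as the paper's: the $2\pi i$-difference operator $\Delta$ isolating $c\,\mu_\nu(e^z)$ after $\deg\psi_\nu$ iterations, a growth bound showing $\Delta$ preserves order at most $1$ and finite type, a P\'olya-type inequality forcing $\log M(s,\mu_\nu)=O(\log s)$, and then peeling off the top term and iterating. The only (harmless) differences are that you make two cited steps explicit and elementary --- a direct maximum-modulus estimate where the paper invokes Chiang--Feng, and the quantitative inequality $M(r,\mu\circ\exp)\ge M(e^{r/2},\mu)$ where the paper combines P\'olya's theorem with a result from Yang--Yi --- and that you conclude polynomiality of each $\mu_j$ at every stage rather than deferring the contradiction to the end.
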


\begin{proof}
We assume that $\mu_i$ are transcendental, and denote $\Delta^0 g(z)=g(z)$ and $\Delta^n g(z)=
\Delta^{n-1}g(z+2\pi i)-\Delta^{n-1}g(z)$ for $n\in\N^+$. If $g$ is of the form \eqref{fu.eq}, then
    $$
    \Delta g(z) =\mu_1(e^z)\psi_{1,1}(z)+\cdots+\mu_\nu(e^z)\psi_{\nu,1}(z),
    $$
where $\psi_{j,1}(z)=\Delta \psi_j(z)$ for $j=1,\ldots,\nu$. We repeat it $\tau=\deg \psi_\nu$ finitely many times,
    \begin{equation}\label{doit.eq}
    \Delta^\tau g(z) =c \mu_\nu(e^z),
    \end{equation}
where $c$ is a constant. Therefore, $g$ is of order $\rho(g)=1$ and mean type implies that $\mu_\nu(e^z)$ is of order $\rho(\mu_\nu(e^z))\leq1$ and mean type or minimal type by \cite[Theorem 2.1]{Chiang-Feng}. Let us set
    $$
    g_1(z)=g(z)-\psi_\nu(z)\mu_\nu(e^z).
    $$
We repeat the argument above, having $g_1$ is of order
$\rho(g_1)\leq 1$ and at most mean type implies that $\mu_{\nu-1}(e^z)$ is of order
$\rho(\mu_{\nu-1}(e^z))\leq 1$ and at most mean type. By repeating the discussion above for at most finitely many times, it follows that $\mu_j(e^z)$ are of order
$\rho(\mu_j(e^z))\leq 1$ and at most mean type for $j=1,\ldots,\nu$. Using P\'olya's Theorem \cite{Polya}, we deduce that $\mu_j(z)$ is of zero order and at most mean type for $j=1,\ldots,\nu$.
We obtain a contradiction from \cite[Theorem 1.46]{Yang-Yi}.
Therefore, we prove Lemma \ref{finite.lemma}.
\end{proof}

It follows that $\mu_{j,i}$ in \eqref{fTheorem2.eq} are polynomials from Lemmas \ref{order.lemma} and \ref{finite.lemma}. Therefore, we prove our assertion of Theorem \ref{gen1.theorem}. In addition, every finite order solution of \eqref{NPDE1} is an exponential sum when $\gamma=0$.

\subsection{Proof of Corollary \ref{two.coro}}

Suppose that $\gamma=0$. From the proof of Theorem \ref{gen1.theorem}, we know that some finite order solutions of \eqref{NPDE1} are of the form
    $$
    f_{j}(z)=e^{\lambda_{j}z}(\mu_{j,0}(e^z)+\mu_{j,1}(e^z)\psi_1(z)+\cdots+\mu_{j,\nu_j}(e^z)\psi_{\nu_j}(z))
    $$
for $j=1,2,\ldots,m (m\leq n)$, where $\mu_{j,i}(z)$ and $\psi_i$ are polynomials such that $\deg(\psi_i)=i$ for $i=0,1,\ldots,\nu_j$, and $\lambda_{j}$ is a root of \eqref{lambda.eq}. Choosing $g(z)=f_j(z)e^{-\lambda_jz}$ which is of the form \eqref{fu.eq}, similar as \eqref{doit.eq}, we get
    $$
    \Delta^{\nu_j}g(z)=c\mu_{j,\nu_j}(e^z),
    $$
where $c$ is a constant. Since $f(z+2\pi i)$ is a finite order solution of \eqref{NPDE1},
it shows us that $e^{\lambda_j z}\mu_{j,\nu_j}(e^z)$ is a finite order solution of \eqref{NPDE1}.
According to Theorem \ref{gen.theorem}, it yields that $u_{j,\nu_j}(t)$ is a polynomial solution of \eqref{ration2.eq}.

\subsection{Proof of Theorem \ref{gen.theorem.mod}}

 Suppose that $f$ is a finite solution of \eqref{NPDE1} and $\gamma\in\N_+$. We assume $\gamma=1$. Obviously, all the coefficients of differential equations \eqref{NPDE1} are simply periodic functions with period $2\pi i$. From the same proof of Lemma~\ref{order.lemma}, we have $f$ is of order 1 and mean type. According to Ince's book \cite[\S 15.7]{Ince}, we can find $n$ linearly independent formal solutions of \eqref{NPDE1} of the form
$$
f(z)=e^{\tilde\lambda z}(\tilde\mu_1(z)\psi_1(z)+\cdots+\tilde\mu_\nu(z)\psi_\nu(z)),
$$
where $\tilde\mu_i(z)$ are periodic functions with period $2\pi i$ and $\psi_i(z)$ are polynomials such that $\deg(\psi_1)<\cdots<\deg (\psi_\nu)$, and $\tilde{\lambda}$ is a root of a $n$-th order algebraic equation. Now we denote $\Delta^0 f(z)=f(z)$ and $\Delta^n f(z)=
	\Delta^{n-1}f(z+2\pi i)-\Delta^{n-1}f(z)$ for $n\in\N_+$.
Using the same method in the proof of Lemma \ref{finite.lemma}, we get the equality
	$$
	\Delta^\tau (f(z)e^{-\tilde\lambda z})=c \tilde\mu_\nu(z),
	$$
where $c$ is a constant and $\tau$ is a positive integer. It follows that $\tilde\mu_\nu(z)$ is entire as well as $f(z)$. At the same time, it implies that $\tilde\mu_\nu(z)$ is of order $\rho(\tilde\mu_\nu(z))\leq 1$ and mean type or minimal type since
$f$ is of order $\rho(f)=1$ and mean type, and \cite[Theorem 2.1]{Chiang-Feng}. Let us set
	$$
	f_1(z)=f(z)e^{-\tilde\lambda z}-\psi_\nu(z)\tilde\mu_\nu(z).
	$$
We repeat the argument above, and obtain that $\tilde\mu_{\nu-1}(z)$ is of order $\rho(\tilde\mu_{\nu-1}(z))\leq 1$ and at most mean type by using $f_1$ is entire of order
	$\rho(f_1)\leq 1$ and at most mean type. By repeating the discussion above for at most finitely many times, it follows that $\tilde\mu_j(z)$ are entire of order $\rho(\tilde\mu_j(z))\leq 1$ and at most mean type for $j=1,\ldots,\nu$. Together with the result in Gross' book \cite[Theorem 2.12]{Gross}, it shows that $\tilde\mu_j(z)$ are of the form
	\begin{equation*}
		\tilde\mu_j(z)=\sum_{k=-n_{j}}^{n_{j}}c_{jk}e^{kz},
	\end{equation*}
where $c_{jk}$ are constants. Therefore, we know every finite order solution is an exponential sum.

\section{Acknowledgments}

The authors would thank the reviewers' great comments to improve the paper. Jun Wang is supported by the National Natural Science Foundation of China (No.~12471072). Zhi-Tao Wen is supported  by the National Natural Science Foundation of China (No.~12471076) and LKSF STU-GTIIT Joint-research Grant (No. 2024LKSFG06).

\bigskip
\noindent
\emph{X.-Y.~Li}\\
\textsc{Shantou University, Department of Mathematics,\\
Daxue Road No.~243, Shantou 515063, China}\\
\texttt{e-mail:19xyli@stu.edu.cn}

\bigskip
\noindent
\emph{J. Wang}\\
\textsc{School of Mathematical Sciences, Fudan University, \\
Shanghai 200433, P. R. China}\\
\texttt{e-mail:majwang@fudan.edu.cn}

\bigskip
\noindent
\emph{Z.-T.~Wen}\\
\textsc{Shantou University, Department of Mathematics,\\
Daxue Road No.~243, Shantou 515063, China}\\
\texttt{e-mail:zhtwen@stu.edu.cn}
\end{document}